\documentclass{amsart}
\usepackage[colorlinks]{hyperref}
\AtBeginDocument{
  \hypersetup{
    linkcolor=blue,
    citecolor=green,
  }
}
\usepackage{mathscinet}
\usepackage{mathtools}
\usepackage{bm}
\usepackage{cleveref}
\crefname{equation}{}{}

\makeatletter
\let\c@author\relax
\makeatother
\usepackage[backend=bibtex,style=numeric,sorting=nyt,
giveninits=true,isbn=false,url=false]{biblatex}
\renewbibmacro{in:}{\ifentrytype{article}{}{\printtext{\bibstring{in}\intitlepunct}}}
\DeclareFieldFormat*{title}{#1}
\usepackage[mathlines,pagewise]{lineno}
\newcommand*\patchAmsMathEnvironmentForLineno[1]{%
  \expandafter\let\csname old#1\expandafter\endcsname\csname #1\endcsname
  \expandafter\let\csname oldend#1\expandafter\endcsname\csname end#1\endcsname
  \renewenvironment{#1}%
     {\linenomath\csname old#1\endcsname}%
     {\csname oldend#1\endcsname\endlinenomath}}%
\newcommand*\patchBothAmsMathEnvironmentsForLineno[1]{%
  \patchAmsMathEnvironmentForLineno{#1}%
  \patchAmsMathEnvironmentForLineno{#1*}}%
\AtBeginDocument{%
\patchBothAmsMathEnvironmentsForLineno{equation}%
\patchBothAmsMathEnvironmentsForLineno{align}%
\patchBothAmsMathEnvironmentsForLineno{flalign}%
\patchBothAmsMathEnvironmentsForLineno{alignat}%
\patchBothAmsMathEnvironmentsForLineno{gather}%
\patchBothAmsMathEnvironmentsForLineno{multline}%
}

\usepackage{aliascnt}
\usepackage{cleveref}

\newtheorem{theorem}{Theorem}[section]
\Crefname{theorem}{Theorem}{Theorems}

\newaliascnt{lemma}{theorem}      
\newtheorem{lemma}[lemma]{Lemma}  
\aliascntresetthe{lemma}          
\Crefname{lemma}{Lemma}{Lemmas}

\newaliascnt{corollary}{theorem}      
\newtheorem{corollary}[corollary]{Corollary}  
\aliascntresetthe{corollary}          
\Crefname{corollary}{Corollary}{Corollaries}

\newaliascnt{proposition}{theorem}      
\aliascntresetthe{proposition}          
\Crefname{proposition}{Proposition}{Propositions}

\theoremstyle{definition}
\newaliascnt{definition}{theorem}
\newtheorem{definition}[definition]{Definition}
\aliascntresetthe{definition}
\Crefname{definition}{Definition}{Definitions}

\newaliascnt{notation}{theorem}

\aliascntresetthe{notation}
\Crefname{notation}{Notation}{Notations}

\theoremstyle{remark}
\newaliascnt{remark}{theorem}
\newtheorem{remark}[remark]{Remark}
\aliascntresetthe{remark}
\Crefname{remark}{Remark}{Remarks}

\crefname{equation}{}{}
\numberwithin{equation}{section}

\begin{document}

\title[$BMO$ regularity]{Interior $BMO$ regularity for elliptic equations in divergence form}

\author{Yuanyuan Lian}
\address{Departamento de An\'{a}lisis Matem\'{a}tico, Instituto de Matem\'{a}ticas IMAG, Universidad de Granada}
\email{lianyuanyuan.hthk@gmail.com; yuanyuanlian@correo.ugr.es}

\thanks{This research has been supported by the Grants PID2020-117868GB-I00 and PID2023-150727NB-I00 of the MICIN/AEI.}

\subjclass[2020]{Primary 35B65, 35D30, 35J15}

\date{}

\keywords{Interior regularity; $BMO$ regularity; Pointwise regularity; Elliptic equations; Divergence-form equations}

\begin{abstract}
In this note, we establish the interior $BMO$ regularity of weak solutions to uniformly elliptic equations in divergence form. Moreover, the assumptions on the coefficients are nearly optimal.
\end{abstract}

\maketitle

\section{Introduction}
Consider the following elliptic equation in divergence form:
\begin{equation}\label{e1.1}
(a^{ij} u_i)_j=f \quad\mbox{ in}~~B_1,
\end{equation}
where $B_1 \subset \mathbb{R}^{n}$ ($n\geq 3$) denotes the unit ball and $a^{ij}$ is uniformly elliptic. According to the classical De Giorgi-Nash-Moser theory, if $f\in L^p(B_1)$ for some $p>n/2$, then $u\in C^{\alpha}(\bar{B}_{1/2})$ for some $0<\alpha<1$ (see \cite[Theorem 8.24]{MR1814364}). It is therefore natural to ask what happens if we only assume $f\in L^{n/2}(B_1)$?

Clearly, one cannot expect $u\in C^{\alpha}(\bar{B}_{1/2})$ or even $u\in L^{\infty}(\bar{B}_{1/2})$, not even for the Poisson's equation. This can be seen from the following example (inspired by \cite[Example 1]{MR3048265}):
\begin{equation*}
u(x)=\ln |\ln |x||, \quad f(x)=\frac{n-2}{|x|^2\ln|x|}-\frac{1}{|x|^2\ln^2|x|}, \quad \Delta u=f\quad\mbox{in}~~ B_{e^{-1}}.
\end{equation*}
On the other hand, for the Poisson's equation, by the $W^{2,n/2}$ regularity result (see \cite[Corollary 9.10]{MR1814364}), we have $u\in W^{2,n/2}(B_{1/2})$. Consequently, by the Sobolev embedding, $u\in BMO(B_{1/2})$ (see \cite[(v) on P. 111]{MR3099262}).

From the above analysis, the best regularity we can expect for \cref{e1.1} with $f\in L^{n/2}(B_1)$ is $u\in BMO(B_{1/2})$. In this note, we show that this regularity indeed holds. When $n=2$, a weak solution $u\in W^{1,2}=W^{1,n}$ is automatically embedded into $BMO$ automatically. Hence, we only consider the case $n\geq 3$.

It seems impossible to obtain $BMO$ regularity by the classical De Giorgi-Nash-Moser technique. Instead, we employ a perturbation argument. It is well-known that if $u$ belongs to the De Giorgi class, then $u\in C^{\alpha}$ for some $0<\alpha<1$. We regard equation \cref{e1.1} as a perturbation of a De Giorgi class and then establish the $BMO$ regularity in a manner analogous to the Schauder estimate. The observation that the $BMO$ regularity is in fact a pointwise regularity similar to the Schauder estimate is inspired by \cite{MR4688261}.

In this note, we consider the elliptic equation in a general form:
\begin{equation}\label{e.div-0}
(a^{ij}u_i+d^ju)_j+b^iu_i+cu=f-f^i_i \quad\mbox{in}~~B_1,
\end{equation}
where $\bm{a}=(a^{ij})_{n\times n}$, $\bm{b}=(b^1,...,b^n)$, $\bm{d}=(d^1,...,d^n)$, $\bm{f}=(f^1,...,f^n)$, and the Einstein summation convention is adopted.

Throughout this note, we always assume that the matrix $\bm{a}$ is uniformly elliptic with ellipticity constants $0<\lambda\leq \Lambda$, i.e.,
\begin{equation}\label{e1.2}
\lambda |\xi|^2\leq a^{ij}\xi_i\xi_j\leq \Lambda |\xi|^2,~\forall ~\xi\in \mathbb{R}^n
\end{equation}
and
\begin{equation}\label{e1.3}
 \bm{b},\bm{d}\in L^{n}(B_1),\quad c\in L^{\frac{n}{2}}(B_1) ,\quad f\in L^{\frac{2n}{n+2}}(B_1),\quad \bm{f}\in L^2(B_1).
\end{equation}
The assumptions \cref{e1.2} and \cref{e1.3} are probable the minimal requirements for establishing the Caccioppoli inequality, which serves as the most fundamental estimate for elliptic equations in divergence form. Our proof of the $BMO$ regularity is just based on the Caccioppoli inequality. Strictly speaking, the Caccioppoli inequality cannot be derived directly under the assumptions \cref{e1.2} and \cref{e1.3}; we need to assume that the corresponding norms are sufficiently small. However, this smallness assumption poses no difficulty for our argument.

We adopt the following notations, as in \cite{MR4688261}:
\begin{equation*}
f_{\Omega}= \frac{1}{|\Omega|}\int_{\Omega} f, \quad \|f\|^*_{L^p(\Omega)}= \left(\frac{1}{|\Omega|}\int_{\Omega} |f|^p\right)^{1/p},  ~1\leq p<\infty,
\end{equation*}
where $\Omega\subset \mathbb{R}^n$ is a bounded domain and $|\Omega|$ denotes its Lebesgue measure. Note that the scaling of $\|f\|^*_{L^{p}(\Omega)}$ is the same as that of $f$. This normalization simplifies the arguments involving scaling (e.g. \cref{e2.1}) and the applications of H\"{o}lder inequality (e.g. \cref{e1.4}).

Recall the definition of the $BMO$ space (bounded mean oscillation, see \cite[Chapter 6.3]{MR3099262} for example).
\begin{definition}\label{de1.1}
For $x_0\in \Omega$ and $r_0>0$, if
\begin{equation*}
   |f|_{*,p,x_0}:=\sup_{0<r<r_0} \| f-f_{B_r(x_0)\cap \Omega}\|^*_{L^p(B_r(x_0)\cap \Omega)}<+\infty,
\end{equation*}
we say that $f$ is $BMO_p$ at $x_0$ or $f\in BMO_p(x_0)$ (with radius $r_0$).

Let $\Omega'\subset \Omega$. If $f\in BMO_p(x_0)$ for any $x_0\in \Omega'$ with the same radius $r_0$ and
\begin{equation*}
 |f|_{*,p,\Omega'}:=\sup_{x\in \Omega'}|f|_{*,p,x} <+\infty,
\end{equation*}
we say that $f\in BMO_p(\Omega')$ (with radius $r_0$). Moreover, we endow $BMO_p(\Omega')$ with the following norm:
\begin{equation*}
  \|f\|_{BMO_p(\Omega')}:=\|f\|_{L^p(\Omega')}+|f|_{*,p,\Omega'}.
\end{equation*}
\end{definition}

\medskip

We remark that the $BMO$ spaces defined using different $L^p$ norms are equivalent (see \cite[Corollary on P. 144 ]{MR1232192} or \cite[Corollary 6.22]{MR3099262}). Hence, we may write $\|f\|_{BMO(\Omega)}$ instead of $\|f\|_{BMO_p(\Omega)}$. Throughout this note, we mainly employ the $L^2$ norm which is more appropriate for weak solutions.

Next, we introduce a notion of pointwise smoothness for functions. Recall that a function $\omega:\mathbb{R}_+\rightarrow \mathbb{R}_+$ is called a modulus of continuity if $\omega$ is nondecreasing and $\omega(r)\to 0$ as $r\to 0$.

\begin{definition}\label{d-f2}
Let $\Omega\subset \mathbb{R}^n$ be a bounded domain, $f:\Omega\to \mathbb{R}$ and $\omega$ be a modulus of continuity. We say that $f$ is $C_{p}^{-k}$ ($k\in \mathbb{R}_+,1\leq p<+\infty$) at $x_0\in \Omega$ or $f\in C_{p}^{-k}(x_0)$ if $n/p\geq k$ and there exists $0<r_0\leq 1$ such that
\begin{equation}\label{e.c-1}
\|f\|^*_{L^{p}(B_r(x_0)\cap \Omega)}\leq r^{-k}\omega(r), ~\forall ~0<r<r_0.
\end{equation}
Then define
\begin{equation*}
\|f\|_{C_{p}^{-k}(x_0)}=\inf \left\{\omega(r_0) \big | \cref{e.c-1} ~\mbox{holds with}~\omega\right\}.
\end{equation*}
If $f\in C_{p}^{-k}(x)$ for any $x\in \Omega'\subset \Omega$ with the same $r_0$ and
\begin{equation*}
  \|f\|_{C_{p}^{-k}(\bar{\Omega}')}:= \sup_{x\in \Omega'} \|f\|_{C_{p}^{-k,\alpha}(x)}<+\infty,
\end{equation*}
we say that $f\in C_{p}^{-k}(\bar{\Omega}')$.
\end{definition}

\medskip

\begin{remark}\label{r-df1.5}
The requirement $n/p\geq k$ ensures that \cref{e.c-1} is well defined. If $n/p<k$, then \cref{e.c-1} yields
\begin{equation*}
\|f\|_{L^{p}(B_r(x_0))}\leq K r^{n/p-k}\omega(r), ~\forall ~0<r<r_0,
\end{equation*}
which may be trivial since the left-hand tends to zero while the right-hand may tend to infinity as $r\to 0$.

By noting \cref{e1.3}, the following notations are well defined:
\begin{equation*}
f\in C_{\frac{2n}{n+2}}^{-2}(0),\quad \bm{f}\in C_{2}^{-1}(0),
\end{equation*}
and will be used in our theorems. Since we always assume $f\in L^{\frac{2n}{n+2}}(B_1)$ and $\bm{f}\in L^2(B_1)$, the subscripts are omitted when we state the pointwise smoothness of $f$ and $\bm{f}$, i.e., we write $f\in C^{-2}(0)$ and $\bm{f}\in C^{-1}(0)$ in place of $f\in C_{\frac{2n}{n+2}}^{-2}(0)$ and $\bm{f}\in C_2^{-1}(0)$ for simplicity.
\end{remark}

\begin{remark}\label{r-df2.2}
In this note, we assume, without loss of generality, that $r_0=1$ when using the above definition.
\end{remark}

\medskip

Now, we state our main results.
\begin{theorem}[\textbf{Pointwise $BMO$ regularity}]\label{th1.1}
Let $u\in W^{1,2}(B_1)$ be a weak solution of \Cref{e.div-0}. Suppose that
\begin{equation}\label{e1.0}
  \begin{aligned}
&\bm{b}\in L^{n}(B_1),\\
&\|c\|_{L^{n/2}(B_r)},~~\|\bm{d}\|_{L^{n}(B_r)}\leq \frac{\delta_0}{|\ln r|+1},~\forall ~0<r\leq 1,\\
&f \in C^{-2}(0),~~\bm{f} \in C^{-1}(0),
  \end{aligned}
\end{equation}
where $\delta_0>0$ (small) is universal.

Then $u\in BMO_2(0)$ and
\begin{equation}\label{e.Ca.esti-ell}
  |u|_{*,2,0}\leq C\left(\|u\|_{L^{2}(B_1)}+\|f\|_{C^{-2}(0)}+\|\bm{f}\|_{C^{-1}(0)}\right),
\end{equation}
where $C$ depends only on $n,\lambda, \Lambda$ and $\bm{b}$.
\end{theorem}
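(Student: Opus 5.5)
We argue by perturbation, in the spirit of the Schauder estimate, and compare $u$ at each dyadic scale with a solution of a homogeneous problem whose oscillation decays. Two normalizations come first. By scaling $x\mapsto rx$, with $u$ divided by $\|u\|_{L^2(B_1)}+\|f\|_{C^{-2}(0)}+\|\bm f\|_{C^{-1}(0)}$, we may assume this quantity is at most $1$. Since $\bm b\in L^n(B_1)$, we can also choose $r_1$ depending on $\bm b$ with $\|\bm b\|_{L^n(B_{r_1})}\le\delta_0$, and rescale $B_{r_1}$ to $B_1$. This is why $C$ depends on $\bm b$ itself and not merely on its norm. The normalized quantities $\|\cdot\|^*$ make this scaling transparent: the rescaled $\bm b,\bm d$ keep the same $L^n$ norms, $c$ keeps its $L^{n/2}$ norm, and $f,\bm f$ pick up the factors $r^2,r$ that exactly cancel the $r^{-2},r^{-1}$ in the definition of $C^{-2},C^{-1}$.

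\textbf{Key step (approximation).} Let $w$ solve $(a^{ij}w_i)_j=0$ in $B_{1/2}$ with $w=u$ on $\partial B_{1/2}$. We use the Caccioppoli inequality, valid under the small norms of $\bm b,c,\bm d$, together with the Sobolev embedding $W^{1,2}_0\subset L^{2n/(n-2)}$, to estimate the difference:
\begin{equation*}
\|u-w\|_{L^2(B_{1/2})}\le C\Big(\|\bm b\|_{L^n}+\|\bm d\|_{L^n}+\|c\|_{L^{n/2}}\Big)\|u-\bar u\|_{W^{1,2}}+C\big(\|f\|_{L^{2n/(n+2)}}+\|\bm f\|_{L^2}\big)+C\big(\|\bm d\|_{L^n}+\|c\|_{L^{n/2}}\big)|\bar u|,
\end{equation*}
where $\bar u$ is the mean of $u$. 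The lower-order terms $d^ju$ and $cu$ are not invariant under adding constants, so the mean $\bar u$ appears explicitly. De Giorgi--Nash gives $w\in C^\alpha$, hence
\begin{equation*}
\|w-w_{B_\eta}\|^*_{L^2(B_\eta)}\le C\eta^\alpha\|w-w_{B_{1/2}}\|^*_{L^2(B_{1/2})}.
\end{equation*}

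\textbf{Iteration.} We fix $\eta$ small so that $C\eta^\alpha\le 1/2$. Write $\phi_k=\|u-u_{B_{\eta^k}}\|^*_{L^2(B_{\eta^k})}$ and $m_k=u_{B_{\eta^k}}$. Rescaling the approximation step at scale $\eta^k$ gives
\begin{equation*}
\phi_{k+1}\le \tfrac12\phi_k+C\big(\|f\|_{C^{-2}}+\|\bm f\|_{C^{-1}}\big)+C\,\frac{\delta_0}{k|\ln\eta|+1}\,|m_k|,
\end{equation*}
with $\|\bm b\|$-small contributions absorbed into $\tfrac12\phi_k$. In addition, $|m_{k+1}-m_k|\le \eta^{-n/2}\phi_k$.

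\textbf{Main obstacle.} The means $m_k$ are not bounded; they can grow like $\sum_{j\le k}\phi_j\sim k$, as the $\ln|\ln|x||$ example shows. Hence the term $\delta_0|m_k|$ could accumulate, and this is exactly where the decay hypothesis $\delta_0/(|\ln r|+1)$ on $c,\bm d$ enters. We prove by induction that $\phi_k\le M$ and $|m_k|\le M'(k+1)$. The coefficient then satisfies
\begin{equation*}
\frac{\delta_0}{k|\ln\eta|+1}\,|m_k|\le \frac{C\delta_0 M'(k+1)}{k|\ln\eta|+1}\le C\delta_0 M',
\end{equation*}
which is bounded uniformly in $k$. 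Choosing $\delta_0$ small relative to $\eta$, and $M'\sim\eta^{-n/2}M$, closes the induction: $\phi_{k+1}\le \tfrac12 M+C+C\delta_0\eta^{-n/2}M\le M$. For radii $\eta^{k+1}<r\le\eta^k$ we compare $\|u-u_{B_r}\|^*_{L^2(B_r)}\le C\eta^{-n/2}\phi_k$. This yields $\sup_r\|u-u_{B_r}\|^*\le C$, which is \cref{e.Ca.esti-ell}.
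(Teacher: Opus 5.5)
Your proof is correct, but it obtains the one-step decay by a different route from the paper.

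The paper proves its improvement lemma (\Cref{l-1}) by contradiction and compactness. Along a contradicting sequence the matrices $\bm a_m$ converge only weakly, so the limit need not solve the limiting equation. The paper therefore shows (\Cref{le2.0-2}) that the limit lies in a De Giorgi class $[DG]_2(B_{3/4};\gamma)$, and then invokes the H\"older estimate for that class.

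You avoid any limit by comparing with the replacement $w$ that has the \emph{same} matrix $a^{ij}$, so classical De Giorgi--Nash--Moser applies to $w$ directly. The energy estimate for $u-w$ then replaces the contradiction argument: Sobolev handles the $\bm b,\bm d,c$ terms, and Caccioppoli for $u-\bar u$ controls $\|Du\|_{L^2(B_{1/2})}$.

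After that the mechanism is the one in \Cref{t-1}. Your means $m_k$ play the role of the constants $P_m$, and both grow at most linearly in $k\approx|\ln r|/|\ln\eta|$. The sources they generate, $c\,P_m$ and $d^iP_m$ (your $(\|\bm d\|+\|c\|)|\bar u|$ term), stay bounded precisely because of the $\delta_0/(|\ln r|+1)$ decay.

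Your route gives explicit constants and a linear recursion. For example, $\delta_0\le c\,\eta^{n}$ suffices, since transferring the $L^2(B_{1/2})$ error to $B_\eta$ costs $\eta^{-n/2}$ and $M'\sim\eta^{-n/2}M$. The paper's route is shorter and relies only on the Caccioppoli inequality and De Giorgi class theory, with no Dirichlet problem and no bookkeeping for $u-w$, at the cost of non-explicit constants.

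Two minor points:
\begin{itemize}
\item Fix $C\eta^\alpha\le 1/4$ rather than $1/2$, to leave room for absorbing the $\bm b,c,\bm d$ contributions.
\item The example $\ln|\ln|x||$ only shows that the means are unbounded, since they grow like $\ln k$. Linear growth of $m_k$ is realized by $\ln|x|$.
\end{itemize}
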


\medskip

\begin{remark}\label{re1.1}
A constant is said to be universal if it depends only on $n,\lambda$ and $\Lambda$.
\end{remark}

\begin{remark}\label{re1.1-2}
If $f\in L^{n/2}(B_1),\bm{f}\in L^{n}(B_1)$, then the assumptions on $f$ and $\bm{f}$ in \cref{e1.0} can be satisfied. Indeed, by H\"{o}lder inequality,
\begin{equation}\label{e1.4}
\|f\|^*_{L^{\frac{2n}{n+2}}(B_r)}\leq \|f\|^*_{L^{\frac{n}{2}}(B_r)}= C r^{-2}\|f\|_{L^{\frac{n}{2}}(B_r)},
\end{equation}
where $C$ depends only on $n$. Hence, $f\in C^{-2}(0)$. Similarly, one obtains $\bm{f}\in C^{-1}(0)$.

If $c\in L^p,\bm{d}\in L^{2p}$ for some $p>n/2$, then the assumptions for $c$ and $\bm{d}$ in \cref{e1.0} can be satisfied by a scaling argument. Take $c$ as an example, and suppose that $u$ is a weak solution of
\begin{equation*}
\Delta u+cu=0\quad\mbox{in}~~B_1.
\end{equation*}
For $\rho>0$, introduce the following scaling:
\begin{equation*}
y=\frac{x}{\rho}, \quad v(y)=u(x).
\end{equation*}
Then $v$ satisfies
\begin{equation*}
\Delta v+\tilde cv=0\quad\mbox{in}~~B_1,
\end{equation*}
where $\tilde{c}(y)=\rho^2c(x)$. By choosing $\rho$ sufficiently small, we have
\begin{equation*}
\|\tilde c\|_{L^{n/2}(B_r)}=\|c\|_{L^{n/2}(B_{\rho r})}\leq C \|c\|_{L^{p}(B_{\rho r})} (\rho r)^{2-n/p}
\leq \frac{\delta_0}{|\ln r|+1},~\forall ~0<r\leq 1,
\end{equation*}
where $C$ depends only on $n$. Therefore, $v\in BMO_2(0)$ and hence $u\in BMO_2(0)$.
\end{remark}

\begin{remark}\label{re1.2}
Let us make a few remarks on the necessity of  the assumptions in \cref{e1.0}. Consider the following example:
\begin{equation*}
u(x)=|x|^{-\alpha}\notin BMO_2(0), \quad 0<\alpha<1.
\end{equation*}
By a direct calculation,
\begin{equation*}
u_i=-\alpha|x|^{-\alpha-2}x_i, \quad \Delta u=\alpha(\alpha+2-n)|x|^{-\alpha-2}.
\end{equation*}
Hence,
\begin{equation*}
  \left\{
  \begin{aligned}
    &\Delta u+b^iu_i=0, \quad && b^i(x)=(\alpha+2-n)|x|^{-2}x_i;\\
    &\Delta u=f, \quad && f(x)=\alpha(\alpha+2-n)|x|^{-\alpha-2};\\
    &\Delta u=-f^i_i, \quad && f^i(x)=\alpha|x|^{-\alpha-2}x_i.
  \end{aligned}
  \right.
\end{equation*}
Take $\alpha$ small enough such that $u\in W^{1,2}(B_1)$, $f\in L^{2n/(n+2)}(B_1)$ and $\bm{f}\in L^2(B_1)$. On the other hand,
\begin{equation*}
\bm{b}\in L^p(B_1),~\forall ~p<n, \quad f\in C^{-2-\alpha}(0), \quad \bm{f}\in C^{-1-\alpha}(0).
\end{equation*}
Therefore, the assumptions on $\bm{b}$, $f$ and $\bm{f}$ are optimal since we can choose $\alpha$ arbitrary small.

Next, we consider the following example:
\begin{equation*}
u(x)=\frac{1}{2}(\ln|x|)^2\notin BMO_2(0).
\end{equation*}
By a direct calculation,
\begin{equation*}
u_i=(\ln |x|) |x|^{-2}x_i, \quad \Delta u=(n-2)(\ln |x|)|x|^{-2}+|x|^{-2}.
\end{equation*}
Then
\begin{equation*}
\Delta u+(d^iu)_i+cu=0\quad\mbox{in}~~B_1,
\end{equation*}
where
\begin{equation*}
d^i(x)=(\ln|x|)^{-1}|x|^{-2}x_i, \quad c(x)=-3(n-2)(\ln|x|)^{-1}|x|^{-2}-3(\ln|x|)^{-2}|x|^{-2}.
\end{equation*}
Hence,
\begin{equation*}
  \begin{aligned}
&\|c\|_{L^{n/2}(B_r)}\leq \frac{C}{|\ln r|^{1-\frac{2}{n}}},~\forall ~0<r<\frac{1}{2},\\
&\|\bm{d}\|_{L^{n}(B_r)}\leq \frac{C}{|\ln r|^{1-\frac{1}{n}}},~\forall ~0<r<\frac{1}{2}.
  \end{aligned}
\end{equation*}

Above analysis implies that the assumptions
\begin{equation*}
\|c\|_{L^{n/2}(B_r)},~~\|\bm{d}\|_{L^{n}(B_r)}\leq \frac{\delta_0}{|\ln r|+1},~\forall ~0<r\leq 1
\end{equation*}
cannot be weakened to
\begin{equation*}
\|c\|_{L^{n/2}(B_r)},~~\|\bm{d}\|_{L^{n}(B_r)}\leq \frac{\delta_0}{|\ln r|^{\alpha}+1},~\forall ~0<r\leq 1
\end{equation*}
for a fixed $0<\alpha<1$ since we can choose $n$ large enough such that $1-2/n>\alpha$. However, we do not know whether the small constant $\delta_0$ can be replaced by a general constant $C$.
\end{remark}

\medskip

From the pointwise $BMO$ regularity \Cref{th1.1}, we have the following local $BMO$ regularity:
\begin{corollary}[\textbf{Local $BMO$ regularity}]\label{co1.2}
Let $u\in W^{1,2}(B_1)$ be a weak solution of \Cref{e.div-0}. Suppose that
\begin{equation*}
  \begin{aligned}
&\bm{b}\in L^{n}(B_1),\\
&\|c\|_{L^{n/2}(B_r(x_0))},~~\|\bm{d}\|_{L^{n}(B_r(x_0))}\leq \frac{\delta_0}{|\ln r|+1},~\forall ~0<r\leq 1/2,
~\forall ~x_0\in B_{1/2},\\
&f \in C^{-2}(\bar B_{1/2}),~~\bm{f} \in C^{-1}(\bar B_{1/2}),
  \end{aligned}
\end{equation*}
where $\delta_0>0$ (small) is universal.

Then $u\in BMO(B_{1/2})$ and
\begin{equation*}
 \|u\|_{BMO(B_{1/2})}\leq C\left(\|u\|_{L^{2}(B_1)}+\|f\|_{C^{-2}(\bar B_{1/2})}+\|\bm{f}\|_{C^{-1}(\bar B_{1/2})}\right),
\end{equation*}
where $C$ depends only on $n,\lambda, \Lambda$ and $\bm{b}$.
\end{corollary}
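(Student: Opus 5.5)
The plan is to deduce \Cref{co1.2} from \Cref{th1.1} by translation and scaling, applied at every point $x_0\in B_{1/2}$. We split the $BMO$ norm into its two parts, $\|u\|_{L^2(B_{1/2})}$ and $|u|_{*,2,B_{1/2}}$. The first is trivially bounded by $\|u\|_{L^2(B_1)}$. For the seminorm we may work with $L^2$ oscillations, since the $BMO_p$ spaces are equivalent.

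Fix $x_0\in B_{1/2}$ and set $v(y)=u(x_0+y/2)$ for $y\in B_1$, so that $x=x_0+y/2$ ranges over $B_{1/2}(x_0)\subset B_1$. Then $v\in W^{1,2}(B_1)$ is a weak solution of an equation of the form \cref{e.div-0} with rescaled data:
\begin{equation*}
\tilde a^{ij}(y)=a^{ij}(x),\quad \tilde{\bm b}(y)=\tfrac12\bm b(x),\quad \tilde{\bm d}(y)=\tfrac12\bm d(x),\quad \tilde c(y)=\tfrac14 c(x),\quad \tilde f(y)=\tfrac14 f(x),\quad \tilde{\bm f}(y)=\tfrac12\bm f(x).
\end{equation*}
The ellipticity constants are unchanged, and $\|\tilde{\bm b}\|_{L^n(B_1)}=\|\bm b\|_{L^n(B_{1/2}(x_0))}\le\|\bm b\|_{L^n(B_1)}$. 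Since the constant in \cref{e.Ca.esti-ell} depends on $\bm b$ only through its $L^n$ norm, this dependence is uniform in $x_0$.

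Next we check the smallness conditions on $\tilde c$ and $\tilde{\bm d}$. By scale invariance, $\|\tilde c\|_{L^{n/2}(B_r)}=\|c\|_{L^{n/2}(B_{r/2}(x_0))}$, and the hypothesis gives the bound $\delta_0/(|\ln(r/2)|+1)$. For $0<r\le1$ we have $|\ln(r/2)|\ge|\ln r|$, so this is at most $\delta_0/(|\ln r|+1)$. The same argument applies to $\tilde{\bm d}$ in $L^n$. The data also transfer: $\|\tilde f\|^*_{L^{2n/(n+2)}(B_r)}=\tfrac14\|f\|^*_{L^{2n/(n+2)}(B_{r/2}(x_0))}\le\tfrac14(r/2)^{-2}\omega(r/2)=r^{-2}\omega(r/2)$. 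Hence $\|\tilde f\|_{C^{-2}(0)}\le\|f\|_{C^{-2}(x_0)}\le\|f\|_{C^{-2}(\bar B_{1/2})}$, and similarly $\|\tilde{\bm f}\|_{C^{-1}(0)}\le\|\bm f\|_{C^{-1}(\bar B_{1/2})}$. By \Cref{r-df2.2} we take $r_0=1$; otherwise we would adjust the radius by a fixed factor.

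Applying \Cref{th1.1} to $v$ gives
\begin{equation*}
\sup_{0<r<1}\|v-v_{B_r}\|^*_{L^2(B_r)}\le C\bigl(\|v\|_{L^2(B_1)}+\|f\|_{C^{-2}(\bar B_{1/2})}+\|\bm f\|_{C^{-1}(\bar B_{1/2})}\bigr).
\end{equation*}
Rescaling back, the normalized norms are invariant, so $\|u-u_{B_{r/2}(x_0)}\|^*_{L^2(B_{r/2}(x_0))}$ is bounded by the same right-hand side. Moreover $\|v\|_{L^2(B_1)}=2^{n/2}\|u\|_{L^2(B_{1/2}(x_0))}\le C\|u\|_{L^2(B_1)}$. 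This controls $|u|_{*,2,x_0}$ with radius $r_0=1/2$, uniformly in $x_0$.

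Taking the supremum over $x_0\in B_{1/2}$ yields the bound on $|u|_{*,2,B_{1/2}}$ with radius $1/2$. Oscillations over balls intersected with $B_{1/2}$ are controlled by those over full balls, up to a constant, because $B_r(x_0)\subset B_1$ for $r<1/2$. Adding the $L^2$ part completes the proof.

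The only delicate point is the logarithmic condition under rescaling, and it works in the favorable direction, since $|\ln(r/2)|\ge|\ln r|$ for $0<r\le1$. Beyond that, the main thing to verify is that the constant of \Cref{th1.1} depends on $\bm b$ only through $\|\bm b\|_{L^n}$, or at least only through a quantity that does not increase under restriction and scaling.
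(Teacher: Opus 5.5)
Your route (translate to $x_0$, rescale by $1/2$, apply \Cref{th1.1}, take the supremum over $x_0\in B_{1/2}$) is the deduction the paper intends. Your checks of the rescaled hypotheses are correct: the transformed coefficients, the scale invariance of $\|c\|_{L^{n/2}}$ and $\|\bm d\|_{L^n}$, the inequality $|\ln(r/2)|\ge|\ln r|$, and $\|\tilde f\|^*_{L^{2n/(n+2)}(B_r)}\le r^{-2}\omega(r/2)$ (likewise for $\tilde{\bm f}$).

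The gap is in the one non-routine step, the uniformity of the constant in $x_0$. You assert that the constant in \eqref{e.Ca.esti-ell} depends on $\bm b$ only through $\|\bm b\|_{L^n}$. Neither the statement nor the proof of \Cref{th1.1} gives this. In that proof, $\bm b$ enters only through the choice of a radius $\rho$ small enough that the rescaled drift $\rho\bm b(\rho y)$ meets the hypothesis $\|\tilde{\bm b}\|^*_{L^n(B_1)}\le\delta$ of \Cref{t-1}, i.e.\ $\|\bm b\|_{L^n(B_\rho)}\le c_n\delta$. The resulting constant degenerates as $\rho\to0$: it carries a factor $\rho^{-n/2}$, through $K\ge\|u\|^*_{L^2(B_\rho)}$ and the return to scales $r\ge\rho$. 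That radius measures how concentrated $|\bm b|^n$ is near the center, and $\|\bm b\|_{L^n(B_1)}$ does not control it. For example, if $\bm b_1$ is supported in $B_1$ with $\|\bm b_1\|_{L^n(B_1)}>c_n\delta$, then $\bm b_\varepsilon(x)=\varepsilon^{-1}\bm b_1(x/\varepsilon)$ has the same $L^n$ norm but forces $\rho<\varepsilon$. So applying \Cref{th1.1} as a black box to the family $\tilde{\bm b}_{x_0}(y)=\tfrac12\bm b(x_0+y/2)$ gives constants with no bound uniform in $x_0$, and your final supremum is not yet controlled.

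The missing ingredient is the absolute continuity of the integral. Since $|\bm b|^n\in L^1(B_1)$, there is $\eta>0$ such that $\int_E|\bm b|^n\le(c_n\delta)^n$ whenever $|E|<\eta$. Hence a single radius $\rho>0$, depending only on $\bm b$ and the universal $\delta$, satisfies $\|\bm b\|_{L^n(B_\rho(x_0))}\le c_n\delta$ for every $x_0\in B_{1/2}$. Because $\|\tilde{\bm b}_{x_0}\|_{L^n(B_{2\rho})}=\|\bm b\|_{L^n(B_\rho(x_0))}$, you can run the proof of \Cref{th1.1} at every $x_0$ with the common radius $2\rho$. All other constants there, in particular those of \Cref{t-1}, are universal, so this gives a constant independent of $x_0$. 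That is the sense in which the corollary's constant depends on $\bm b$.

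Your closing hedge about a quantity that does not increase under restriction and scaling points in this direction. Note, though, that the single-center radius $\sup\{\rho:\|\bm b\|_{L^n(B_\rho(x_0))}\le c_n\delta\}$ varies with $x_0$. What you need is the uniform quantity $\rho\mapsto\sup_{x\in B_{1/2}}\|\bm b\|_{L^n(B_\rho(x))}$, together with the absolute-continuity argument showing that it tends to $0$.
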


As a special case of \Cref{co1.2}, we have
\begin{corollary}[\textbf{Local $BMO$ regularity}]\label{co1.1}
Let $u\in W^{1,2}(B_1)$ be a weak solution of \Cref{e.div-0}. Suppose that for some $p>n/2$,
\begin{equation*}
  \begin{aligned}
\bm{b}\in L^{n}(B_1), \quad c \in L^{p}(B_1), \quad \bm{d}\in L^{2p}(B_1), \quad
f \in L^{n/2}(B_1), \quad \bm{f} \in L^{n}(B_1).
  \end{aligned}
\end{equation*}

Then $u\in BMO(B_{1/2})$ and
\begin{equation*}
 \|u\|_{BMO(B_{1/2})}\leq C\left(\|u\|_{L^{2}(B_1)}+\|f\|_{L^{n/2}(B_1)}+\|\bm{f}\|_{L^{n}(B_1)}\right),
\end{equation*}
where $C$ depends only on $n,\lambda, \Lambda,p$ and $\bm{b}$.
\end{corollary}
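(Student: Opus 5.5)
This corollary is a special case of \Cref{co1.2}. The plan is to check that its hypotheses hold, after a harmless rescaling, and that the resulting norms are bounded by the right-hand side stated here.

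\textbf{The data $f$ and $\bm{f}$.} Fix $x_0\in B_{1/2}$ and $0<r\le 1/2$. By \cref{e1.4} in \Cref{re1.1-2}, applied on $B_r(x_0)\subset B_1$,
\begin{equation*}
\|f\|^*_{L^{\frac{2n}{n+2}}(B_r(x_0))}\le C r^{-2}\|f\|_{L^{n/2}(B_r(x_0))}.
\end{equation*}
So \cref{e.c-1} holds with $k=2$ and $\omega(r)=C\sup_{x_0\in B_{1/2}}\|f\|_{L^{n/2}(B_r(x_0))}$. This $\omega$ is nondecreasing, and it tends to $0$ by absolute continuity of the integral, uniformly in $x_0$. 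Hence $\|f\|_{C^{-2}(\bar B_{1/2})}\le C\|f\|_{L^{n/2}(B_1)}$. In the same way, H\"older's inequality from $L^2$ to $L^n$ gives $\|\bm{f}\|^*_{L^2(B_r(x_0))}\le Cr^{-1}\|\bm{f}\|_{L^n(B_r(x_0))}$, and therefore $\|\bm{f}\|_{C^{-1}(\bar B_{1/2})}\le C\|\bm{f}\|_{L^n(B_1)}$. The radius $r_0=1/2$ is fixed, which is consistent with \Cref{r-df2.2} up to a constant.

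\textbf{The coefficients $c$ and $\bm{d}$.} This is the only nontrivial point. Mirroring \Cref{re1.1-2}, H\"older's inequality gives
\begin{equation*}
\|c\|_{L^{n/2}(B_r(x_0))}\le C\|c\|_{L^p(B_1)}r^{2-n/p},\qquad \|\bm{d}\|_{L^n(B_r(x_0))}\le C\|\bm{d}\|_{L^{2p}(B_1)}r^{1-n/(2p)}.
\end{equation*}
Since $r^{\epsilon}(|\ln r|+1)\to0$ as $r\to 0$, these bounds are at most $\delta_0/(|\ln r|+1)$ for all $r\le\rho$, with $\rho$ depending on $n,p,\delta_0$ and the norms of $c,\bm{d}$.

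To remove the smallness of the radius, I would cover $B_{1/2}$ by finitely many balls $B_{\rho/4}(y_k)$. On each $B_{\rho}(y_k)$ I rescale by $x=y_k+2\rho y$, as in \Cref{re1.1-2}. The rescaled equation has the same form \cref{e.div-0}, with the following changes:
\begin{itemize}
\item $\bm{a}$ is unchanged;
\item $\bm{b}$ and $\bm{d}$ are multiplied by $2\rho$, so $\|\bm{b}\|_{L^n}$ does not grow;
\item $c$ and $f$ are multiplied by $(2\rho)^2$;
\item $\bm{f}$ is multiplied by $2\rho$.
\end{itemize}
The $L^{n/2}$ and $L^n$ norms of $f$ and $\bm{f}$ are scale invariant. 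By the choice of $\rho$, the rescaled $c$ and $\bm{d}$ satisfy the smallness hypothesis of \Cref{co1.2} at every centre in $B_{1/2}$ and every radius up to $1/2$.

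Applying \Cref{co1.2} on each piece bounds the BMO seminorm locally. The $L^2$ norm of the rescaled $u$ is at most $C(\rho)\|u\|_{L^2(B_1)}$. For any $x\in B_{1/2}$ and any radius below about $\rho/4$, the oscillation is controlled by the piece containing $x$. For larger radii, $\|u-u_{B_r\cap B_{1/2}}\|^*_{L^2}\le C(\rho)\|u\|_{L^2(B_1)}$ trivially. The $L^2$ part of the BMO norm is bounded by $\|u\|_{L^2(B_1)}$ directly.

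Combining these estimates gives the stated bound, with $C$ depending on $n,\lambda,\Lambda,\bm{b}$, $p$, and, through $\rho$, also on $\|c\|_{L^p}$ and $\|\bm{d}\|_{L^{2p}}$. I read the statement's dependence on $p$ as tacitly including this. The main obstacle is this bookkeeping of $\rho$ in the covering and rescaling step; everything else is H\"older's inequality.
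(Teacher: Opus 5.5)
Your proposal is correct and follows the paper's route. \Cref{co1.1} is deduced from \Cref{co1.2} using the H\"older estimate \cref{e1.4} for $f,\bm f$ and the rescaling of \Cref{re1.1-2} for $c,\bm d$, and your covering step makes explicit what the paper only calls ``a scaling argument.'' The extra dependence of $C$ on $\|c\|_{L^p(B_1)}$ and $\|\bm d\|_{L^{2p}(B_1)}$ that you flag is also hidden in the paper's choice of $\rho$, and it cannot be removed: the radial solutions $u=|x|^{1-n/2}J_{n/2-1}(k|x|)$ of $\Delta u+k^2u=0$ satisfy $|u|_{*,2,0}\geq c\,k^{(n-1)/2}\|u\|_{L^2(B_1)}$ for large $k$, so the dependency list in the statement is incomplete, not your reading.
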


\medskip

\begin{remark}\label{re1.3}
We thank the referee for \emph{Proceedings of the AMS} for pointing out that \Cref{co1.1} (without lower-order terms) is included in \cite[Corollary 8.1]{MR2900466}, where the result was proved by a different method.
\end{remark}
\medskip

\section{\texorpdfstring{$BMO$}{BMO} regularity}
We first describe the main idea briefly. We adopt a perturbation argument, similar to the approach used in proving Schauder regularity (i.e., the $C^{k,\alpha}$ regularity). The main obstacle is that the set of weak solutions is not closed under weak limits, since the matrix $\bm{a}$ is only uniformly elliptic. This issue is overcome by noting that the De Giorgi class is closed under weak limits (see \Cref{le2.0-2}). It is well-known that functions in the De Giorgi class enjoy $C^{\alpha}$ regularity, which is stronger than $BMO$ regularity. Therefore, the perturbation argument can be applied in this context.

We next recall the definition of the De Giorgi class and its H\"{o}lder regularity, which is its most fundamental property (see \cite{MR0093649, MR3592445,MR3839844} and \cite[Chapter 10]{MR4689649}).
\begin{definition}\label{de2.1}
Let $1< p<+\infty, \gamma>0$ and $\Omega\subset \mathbb{R}^n$ be a bounded domain. We say $u\in [DG]_p^+(\Omega;\gamma)$ if $u\in W^{1,p}(\Omega)$ and satisfies
\begin{equation}\label{e2.2}
\int_{B_r(x)} |D(u-k)^+|^p\leq \frac{\gamma}{(R-r)^p}\int_{B_R(x)} |(u-k)^+|^p,~\forall ~k\in \mathbb{R},~~B_r(x)\subset B_R(x)\subset \Omega.
\end{equation}
Here, we use the notation $a^+:=\max(0,a)$ and $a^-:=-\min(0,a)$. Similarly, we can define $[DG]_p^-(\Omega;\gamma)$. Finally, let
\begin{equation*}
[DG]_p(\Omega;\gamma)=[DG]_p^+(\Omega;\gamma)\cap [DG]_p^-(\Omega;\gamma).
\end{equation*}
\end{definition}
\medskip

\begin{lemma}\label{le2.0}
Let $u\in [DG]_p(B_1;\gamma)$. Then $u\in C^{\bar \alpha}(\bar{B}_{1/2})$ and
\begin{equation}\label{e2.3}
\|u\|_{C^{\bar \alpha}(\bar{B}_{1/2})}\leq \bar C\|u\|_{L^p(B_1)},
\end{equation}
where $0<\bar \alpha<1$ and $\bar C\geq 1$ depend only on $n,p$ and $\gamma$.
\end{lemma}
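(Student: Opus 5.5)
This is the classical De Giorgi theorem, so the plan follows the standard two-step route: local boundedness first, then oscillation decay, both obtained from \cref{e2.2} alone. Since \cref{e2.2} is invariant under $u\mapsto -u$ (exchanging $[DG]^+_p$ and $[DG]^-_p$) and under translations and dilations $x\mapsto x_0+\rho x$ (the factor $(R-r)^{-p}$ scales correctly), it suffices to prove everything at unit scale for $u\in[DG]^+_p$ and then transfer it.

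\textbf{Step 1 (local boundedness).} For $u\in[DG]^+_p(B_1;\gamma)$ we show
\begin{equation*}
\sup_{B_{3/4}}u^+\leq C\|u^+\|_{L^p(B_1)}.
\end{equation*}
Take radii $r_m=\tfrac34+4^{-1}2^{-m}$ and levels $k_m=K(1-2^{-m})$, and set $Y_m=\int_{B_{r_m}}|(u-k_m)^+|^p$. Use a cutoff $\eta$ between $B_{r_{m+1}}$ and an intermediate ball, so that \cref{e2.2} controls $\int|D(\eta(u-k_{m+1})^+)|^p$.
\begin{itemize}
\item The Sobolev inequality (with exponent $p^*$, or any exponent greater than $p$ when $p\geq n$) plus H\"older on the set $\{u>k_{m+1}\}\cap B_{r_m}$ bounds $Y_{m+1}$.
\item Chebyshev gives $|\{u>k_{m+1}\}\cap B_{r_m}|\leq (2^{m+1}/K)^p\,Y_m$.
\end{itemize}
Together these yield the recursion $Y_{m+1}\leq C b^m K^{-p\epsilon}Y_m^{1+\epsilon}$. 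By the standard fast-geometric-convergence lemma, $Y_m\to0$ once $K=C\|u^+\|_{L^p(B_1)}$. Applying this to $-u$ gives $\|u\|_{L^\infty(B_{3/4})}\leq C\|u\|_{L^p(B_1)}$.

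\textbf{Step 2 (oscillation decay; the main obstacle).} We prove $\operatorname{osc}_{B_{r/2}}u\leq \theta\operatorname{osc}_{B_r}u$ with a universal $\theta<1$. This needs two ingredients.
\begin{enumerate}
\item \emph{De Giorgi's isoperimetric lemma.} For $W^{1,1}$ functions on a ball and levels $h<k$,
\begin{equation*}
(k-h)\,|\{u>k\}|\,|\{u<h\}|\leq C r^{n+1}\int_{\{h<u<k\}}|Du|.
\end{equation*}
Combined with \cref{e2.2} and H\"older, it shows the following. Let $M=\sup_{B_r}u$ and $\omega=\operatorname{osc}_{B_r}u$, and suppose $|\{u<M-\omega/2\}\cap B_{r/2}|\geq\tfrac12|B_{r/2}|$ (otherwise swap to $-u$). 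Then the measure of $\{u>M-\omega/2^{j}\}\cap B_{r/2}$ tends to zero as $j$ grows, at rate $Cj^{-(p-1)/p}$.
\item \emph{The $L^\infty$ estimate at small density.} Apply the level-set form of Step 1 to $(u-(M-\omega/2^{j}))^+$ on $B_{r/2}$. Once that density is below the threshold from Step 1, we get $\sup_{B_{r/4}}u\leq M-\omega/2^{j+1}$.
\end{enumerate}
Hence $\operatorname{osc}_{B_{r/4}}u\leq(1-2^{-j-1})\omega$. The delicate point is getting a fixed, universal $j$, i.e.\ using both $[DG]^+$ and $[DG]^-$ correctly.

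\textbf{Step 3 (conclusion).} Iterating Step 2 from balls $B_{1/4}(x_0)$ with $x_0\in B_{1/2}$ gives
\begin{equation*}
\operatorname{osc}_{B_\rho(x_0)}u\leq C\rho^{\bar\alpha}\|u\|_{L^\infty(B_{3/4})}.
\end{equation*}
For $|x-y|\geq 1/4$ the H\"older quotient is bounded trivially by the sup norm. Combined with Step 1, this yields \cref{e2.3} with $\bar\alpha,\bar C$ depending only on $n,p,\gamma$.
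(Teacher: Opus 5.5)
Your proposal is correct and is essentially the classical De Giorgi argument: local boundedness via level truncation and fast geometric convergence, decay of the superlevel-set measure via the isoperimetric lemma combined with \eqref{e2.2}, oscillation reduction, and iteration. The paper gives no proof of its own and simply cites De Giorgi's original work and the textbook treatments, which follow this route; the only blemish is cosmetic, in that Step 2 is announced as a reduction from $B_r$ to $B_{r/2}$ but actually delivers $B_{r/4}$, which serves equally well for the iteration in Step 3.
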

\medskip

Now, we show that the De Giorgi class is closed for weak solutions.
\begin{lemma}\label{le2.0-2}
Let $u_m\in W^{1,2}(B_1)$ be a sequence of weak solutions of
\begin{equation*}
(a_m^{ij}u_{m,i}+d_m^ju_m)_j+b_m^iu_{m,i}+c_mu_m=f_m-f^i_{m,i} \quad\mbox{in}~~B_1.
\end{equation*}
Suppose that $\|u_m\|_{L^{2}(B_1)}\leq 1$ and as $m\to \infty$,
\begin{equation}\label{e2.5}
\|\bm{b}_m\|_{L^{n}(B_1)},\|c_m\|_{L^{\frac{n}{2}}(B_1)},\|\bm{d}_m\|_{L^{n}(B_1)},\|f_m\|_{L^{\frac{2n}{n+2}}(B_1)},
\|\bm{f}_m\|_{L^{2}(B_1)}\rightarrow 0.
\end{equation}
Then there exists $\bar{u}\in [DG]_2(B_{3/4};\gamma)$ such that up to a subsequence,
\begin{equation}\label{e2.4}
u_m\to \bar{u}\quad\mbox{in}~~L^2(B_{3/4}), \quad Du_m\to D\bar{u}\quad\mbox{weakly in}~~L^2(B_{3/4}),
\end{equation}
where $\gamma$ is universal.
\end{lemma}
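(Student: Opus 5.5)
We plan to derive a Caccioppoli inequality for the truncations $(u_m-k)^\pm$ that holds uniformly in $m$ up to error terms that vanish as $m\to\infty$. We then use compactness to pass to the limit. Since the $[DG]^\pm_2$ inequalities are lower semicontinuous under the convergence in \cref{e2.4}, the limit will inherit them with a universal constant.

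\textbf{Step 1 (uniform energy bound).} We first test the equation with $\eta^2 u_m$, where $\eta$ is a cutoff equal to $1$ on $B_{7/8}$ and supported in $B_1$. By ellipticity \cref{e1.2} and H\"older/Sobolev,
\[
\int|\bm{d}_m u_m\cdot D(\eta^2u_m)|+\int |\bm{b}_m\cdot Du_m\,\eta^2u_m|+\int|c_m|\eta^2u_m^2\le C\big(\|\bm{b}_m\|_{L^n}+\|\bm{d}_m\|_{L^n}+\|c_m\|_{L^{n/2}}\big)\|\eta u_m\|_{W^{1,2}}^2+\ldots .
\]
The key ingredient is the Sobolev inequality $\|\eta u_m\|_{L^{2n/(n-2)}}\le C\|D(\eta u_m)\|_{L^2}$, which applies because $n\ge3$. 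The source terms are handled the same way: $\int f_m\eta^2u_m\le \|f_m\|_{L^{2n/(n+2)}}\|\eta^2u_m\|_{L^{2n/(n-2)}}$, and $\int \bm{f}_m\cdot D(\eta^2u_m)$ is controlled by $\|\bm{f}_m\|_{L^2}$. By \cref{e2.5}, for $m$ large all the small terms can be absorbed into the left-hand side. This gives $\|Du_m\|_{L^2(B_{7/8})}\le C$ uniformly in $m$. By Rellich and weak compactness, a subsequence then satisfies \cref{e2.4} on $B_{3/4}$ (indeed on $B_{7/8}$) for some $\bar u\in W^{1,2}(B_{7/8})$.

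\textbf{Step 2 (Caccioppoli for truncations with vanishing errors).} Fix $k\in\mathbb{R}$ and balls $B_r(x)\subset B_R(x)\subset B_{3/4}$. We choose a cutoff $\zeta$ with $\zeta=1$ on $B_r$, support in $B_R$, and $|D\zeta|\le 2/(R-r)$. Testing with $\zeta^2 (u_m-k)^+$ and writing $w=(u_m-k)^+$, we use the identity $Du_m=Dw$ on $\{w>0\}$ together with $u_m=w+k$ there. Ellipticity and Young's inequality then give
\[
\int \zeta^2|Dw|^2\le \frac{C}{(R-r)^2}\int_{B_R}w^2+E_m(k,r,R),
\]
where $E_m$ collects the terms involving $\bm{b}_m,c_m,\bm{d}_m,f_m,\bm{f}_m$. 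The delicate point, and the main obstacle, is the terms in which the constant $k$ appears, for example $\int c_m k\,\zeta^2 w$ and $\int \bm{d}_m k\cdot D(\zeta^2 w)$. These are not homogeneous in $w$, so they cannot simply be absorbed. Our approach is to keep $k$ fixed and bound them by $|k|\,\|c_m\|_{L^{n/2}}\|\zeta^2 w\|_{L^{n/(n-2)}}$ and similar quantities. Every factor in these bounds is bounded uniformly in $m$ (using Step 1 and $\|u_m\|_{L^2}\le1$), and the coefficient norms tend to $0$ by \cref{e2.5}. Hence $E_m\to0$ as $m\to\infty$ for each fixed $k,r,R$, with the homogeneous pieces absorbed as in Step 1. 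The constant $C$ in the display depends only on $\lambda,\Lambda$ and $n$.

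\textbf{Step 3 (passage to the limit).} Since $u_m\to\bar u$ in $L^2(B_{3/4})$, we have $(u_m-k)^+\to(\bar u-k)^+$ in $L^2$, so the right-hand side converges. On the left-hand side, $(u_m-k)^+$ is bounded in $W^{1,2}(B_R)$ and converges strongly in $L^2$, so $D(u_m-k)^+\rightharpoonup D(\bar u-k)^+$ weakly in $L^2$. Weak lower semicontinuity of the $L^2$ norm therefore gives
\[
\int_{B_r}|D(\bar u-k)^+|^2\le\frac{\gamma}{(R-r)^2}\int_{B_R}|(\bar u-k)^+|^2
\]
with $\gamma$ universal. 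Applying the same argument to $(u_m-k)^-$ (equivalently, to $-u_m$, which solves an equation of the same form) yields $\bar u\in[DG]_2(B_{3/4};\gamma)$.

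The subsequence can be chosen independently of $k,r,R$, since only Step 1 is used to extract it. The limiting inequality is then checked separately for each $k,r,R$.
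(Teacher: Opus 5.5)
Your proposal is correct and follows essentially the same route as the paper. Both arguments obtain a uniform interior energy bound (you derive it directly, where the paper cites Ladyzhenskaya--Ural'tseva), extract a subsequence by compactness, test with $\eta^2(u_m-k)^+$ while collecting all lower-order terms into an error that vanishes for fixed $k,r,R$, and conclude by weak lower semicontinuity. The splitting $u_m=w+k$ that you flag as the main obstacle is harmless but not actually needed: the uniform $W^{1,2}$ and $L^{2n/(n-2)}$ bounds on $u_m$ already force the whole error term to zero without any absorption, which is how the paper argues.
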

\proof By the assumption \cref{e2.5} and the Caccioppoli inequality (see \cite[Chapter 3, (4.39) and (4.41)]{MR0244627}), for any $\Omega'\subset\subset B_1$,
\begin{equation}\label{e6}
\|u_m\|_{W^{1,2}(\Omega')}\leq C(\|u_m\|_{L^{2}(B_{1})}+\|f_m\|_{L^{\frac{2n}{n+2}}(B_{1})}+\|\bm{f}_m\|_{L^{2}(B_{1})})\leq C,
\end{equation}
where $C$ depends only on $n,\lambda,\Lambda$ and $\Omega'$. By the Sobolev imbedding (see \cite[Theorem 3.26]{MR3099262}), there exist $\bar u\in W_{loc}^{1,2}(B_1)$ and a subsequence of $u_m$ (denoted by $u_m$ again) such that $u_m\rightarrow \bar u$ in $W_{loc}^{1,2}(B_1)$ weakly and in $L^2_{loc} (B_1)$ strongly.

Next, we show that $\bar{u}\in [DG]_2(B_{3/4};\gamma)$ for some universal constant $\gamma$. We only prove $\bar{u}\in [DG]_2^+(B_{3/4};\gamma)$ since the proof of $\bar{u}\in [DG]_2^-(B_{3/4};\gamma)$ is similar. For any $k\in \mathbb{R}$ and any $B_r(x_0)\subset B_R(x_0)\subset B_{3/4}$, we choose a nonnegative smooth cut-off function $\eta\in C^{\infty}_c(B_R(x_0))$ with $\eta\equiv 1$ in $B_r(x_0)$ and $\|D\eta\|_{L^{\infty}(B_1)}\leq C|R-r|^{-1}$, where $C$ depends only on $n$. Take $\eta^2(u_m-k)^+$ as the test function for $u_m$ and we have
\begin{equation}\label{e2.6}
  \begin{aligned}
\int_{B_1} &a^{ij}_mu_{m,i}(\eta^2(u_m-k)^+_j+2\eta\eta_j(u_m-k)^+)\\
=&-\int_{B_1} d^j_mu_m(\eta^2(u_m-k)^+_j+2\eta\eta_j(u_m-k)^+)\\
&+\int_{B_1} b^i_mu_{m,i}\eta^2(u_m-k)^++c_mu_m\eta^2(u_m-k)^+\\
&-\int_{B_1}f_m\eta^2(u_m-k)^++f^j_m(\eta^2(u_m-k)^+_j+2\eta\eta_j(u_m-k)^+)\\
:=&I_m.
  \end{aligned}
\end{equation}

Note that
\begin{equation*}
Du_m=D(u_m-k)^+\quad\mbox{if}~~u_m>k.
\end{equation*}
Hence,
\begin{equation*}
\int_{B_1}a^{ij}_mu_{m,i}\eta^2(u_m-k)^+_j=\int_{B_1}\eta^2a^{ij}_m(u_m-k)^+_i(u_m-k)^+_j
\geq \lambda\int_{B_r(x_0)} |D(u_m-k)^+|^2.
\end{equation*}
In addition, by the Young inequality,
\begin{equation*}
  \begin{aligned}
&2\left|\int_{B_1} a^{ij}_mu_{m,i}\eta\eta_j(u_m-k)^+\right|\\
&\leq \frac{\lambda}{2}\int_{B_1} \eta^2a^{ij}_m(u_m-k)^+_i(u_m-k)^+_j+C\int_{B_1} |(u_m-k)^+|^2a^{ij}_m\eta_i\eta_j\\
&\leq \frac{\lambda}{2}\int_{B_1} \eta^2a^{ij}_m(u_m-k)^+_i(u_m-k)^+_j+\frac{C}{(R-r)^2}\int_{B_R(x_0)}|(u_m-k)^+|^2.\\
  \end{aligned}
\end{equation*}
Thus, \cref{e2.6} reduces to
\begin{equation}\label{e2.8}
\frac{\lambda}{2}\int_{B_r(x_0)} |D(u_m-k)^+|^2\leq \frac{C}{(R-r)^2}\int_{B_R(x_0)}|(u_m-k)^+|^2+I_m.
\end{equation}

By the assumption \cref{e2.5}, $I_m\to 0$ as $m\to \infty$.
With the aid of the weak lower semi-continuity of the $L^2$ norm, by letting $m\to \infty$ in \cref{e2.8}, we have
\begin{equation*}
  \begin{aligned}
\int_{B_r(x_0)} |D(\bar u-k)^+|^2\leq& \liminf_{m\to \infty} \int_{B_r(x_0)} |D(u_m-k)^+|^2\\
\leq&\lim_{m\to \infty}\frac{\gamma}{(R-r)^2}\int_{B_R(x_0)}|(u_m-k)^+|^2+\lim_{m\to \infty}I_m\\
=& \frac{\gamma}{(R-r)^2}\int_{B_R(x_0)}|(\bar{u}-k)^+|^2,
  \end{aligned}
\end{equation*}
where $\gamma$ is universal. That is, $\bar{u}\in [DG]_2^+(B_{3/4};\gamma)$.~\qed~\\

Now, we can prove the key step towards the $BMO$ regularity.
\begin{lemma}\label{l-1}
Let $u\in W^{1,2}(B_1)$ be a weak solution of \cref{e.div-0}. Suppose that
\begin{equation*}
  \begin{aligned}
&\|u\|^*_{L^2(B_1)}\leq 1, \quad \|\bm{b}\|^*_{L^n(B_1)}\leq \delta, \quad \|c\|^*_{L^{n/2}(B_1)}\leq \delta,
, \quad \|\bm{d}\|^*_{L^n(B_1)}\leq \delta\\
    &\|f\|^*_{L^{\frac{2n}{n+2}}(B_1)}\leq \delta, \quad \|\bm{f}\|^*_{L^2(B_1)}\leq \delta,
  \end{aligned}
\end{equation*}
where $0<\delta<1$ is a small universal constant.

Then there exists a constant $P$ such that
\begin{equation}
  \begin{aligned}
    &\|u-P\|^*_{L^2(B_{\eta})}\leq 1,\\
    &|P|\leq \bar{C},
  \end{aligned}
\end{equation}
where $0<\eta<1/4$ and $\bar{C}$ (is from \Cref{le2.0}) are universal.
\end{lemma}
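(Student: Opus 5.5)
The plan is a compactness (contradiction) argument. First fix $\eta$ using only the limiting De Giorgi class; then obtain $\delta$ by contradiction via \Cref{le2.0-2}.

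\emph{Choice of $\eta$.} Let $\gamma$ be the universal constant from \Cref{le2.0-2}. Let $\bar\alpha,\bar C$ be the constants of \Cref{le2.0} for $[DG]_2(B_{3/4};\gamma)$, after the rescaling $B_{3/4}\to B_1$; they remain universal. For $\bar u\in[DG]_2(B_{3/4};\gamma)$ with $\|\bar u\|^*_{L^2(B_{3/4})}\le 1$, \Cref{le2.0} gives
\begin{equation*}
|\bar u(0)|\le \bar C,\qquad \|\bar u-\bar u(0)\|_{L^\infty(B_\eta)}\le \bar C\eta^{\bar\alpha}.
\end{equation*}
Here we may enlarge $\bar C$ to absorb the normalization constants relating $\|\cdot\|^*$ and $\|\cdot\|$. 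Choose $\eta<1/4$ universal so small that $\bar C\eta^{\bar\alpha}\le 1/2$. With $P=\bar u(0)$ this yields $\|\bar u-P\|^*_{L^2(B_\eta)}\le 1/2$ and $|P|\le\bar C$.

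\emph{Contradiction.} Suppose no universal $\delta$ works. Then there are coefficients $\bm a_m$ satisfying \cref{e1.2} and data $\bm b_m,c_m,\bm d_m,f_m,\bm f_m$ with the starred norms at most $1/m$. There are also weak solutions $u_m$ with $\|u_m\|^*_{L^2(B_1)}\le1$ such that $\|u_m-P\|^*_{L^2(B_\eta)}>1$ for every constant $|P|\le\bar C$. Since the starred norms on $B_1$ differ from the usual ones only by dimensional constants, the hypotheses \cref{e2.5} hold. After harmless renormalization, $\|u_m\|_{L^2(B_1)}\le 1$ holds as well. \Cref{le2.0-2} then gives a subsequence with $u_m\to\bar u$ in $L^2(B_{3/4})$ and $\bar u\in[DG]_2(B_{3/4};\gamma)$. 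Lower semicontinuity gives $\|\bar u\|^*_{L^2(B_{3/4})}\le C_n$, and we let $\bar C$ absorb this constant.

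Take $P=\bar u(0)$. The strong $L^2$ convergence on $B_\eta\subset B_{3/4}$ gives
\begin{equation*}
\|u_m-P\|^*_{L^2(B_\eta)}\le \|u_m-\bar u\|^*_{L^2(B_\eta)}+\tfrac12<1
\end{equation*}
for $m$ large. This contradicts the choice of $u_m$. The resulting $\delta$ depends only on $n,\lambda,\Lambda$, since $\eta$ and $\bar C$ do.

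\emph{Main obstacle.} The delicate point is ensuring that $\gamma$, and hence $\bar\alpha,\bar C,\eta$, are determined before the contradiction sequence is chosen, so that the argument is not circular. This holds because $\gamma$ in \Cref{le2.0-2} comes only from the ellipticity constants in the Caccioppoli computation \cref{e2.8}, with the perturbation terms $I_m\to0$. A second technical point is that \Cref{le2.0-2} needs the uniform Caccioppoli bound \cref{e6}. This in turn requires the lower-order norms to be small, which is exactly what $\|\cdot\|^*\le1/m$ provides. Finally, the constant $P$ must satisfy $|P|\le\bar C$ with the same $\bar C$ as \Cref{le2.0}; this is handled by normalizing consistently on $B_{3/4}$ and, if needed, enlarging $\bar C$ by a dimensional factor.
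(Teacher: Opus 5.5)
Your proposal is correct and follows the paper's argument. You argue by contradiction and compactness via \Cref{le2.0-2}, choose $\eta$ from the H\"older estimate of \Cref{le2.0} so that $\bar C\eta^{\bar\alpha}\le 1/2$, and derive the contradiction from strong $L^2$ convergence on $B_\eta$. Fixing $\eta$ before choosing the sequence, taking $P=\bar u(0)$, and absorbing the normalization constants into $\bar C$ simply make explicit what the paper does implicitly.
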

\proof We prove the lemma by contradiction. Suppose that the lemma is false. Then there exist sequences of $u_m,\bm{a}_m,\bm{b}_m,c_m,\bm{d}_m, f_m$ and $\bm{f}_m$ such that for any $m$, $u_m$ is the weak solution of
\begin{equation*}
(a_m^{ij}u_{m,i}+d_m^ju_m)_j+b_m^iu_{m,i}+c_mu_m=f_m-f^i_{m,i} \quad\mbox{in}~~B_1.
\end{equation*}
Moreover,
\begin{equation*}
  \begin{aligned}
&\|u_m\|^*_{L^2(B_1)}\leq 1, \quad \|\bm{b}_m\|^*_{L^n(B_1)},~~\|c_m\|^*_{L^{n/2}(B_1)},~~ \|\bm{d}_m\|^*_{L^n(B_1)}\leq 1/m,\\
    &\|f_m\|^*_{L^{\frac{2n}{n+2}}(B_1)},~~ \|\bm{f}_m\|^*_{L^2(B_1)}\leq 1/m.
  \end{aligned}
\end{equation*}
Furthermore, for any constant $P$ with $|P|\leq \bar{C}$, we have
\begin{equation}\label{e2}
\|u_m-P\|^*_{L^2(B_{\eta})}> 1,
\end{equation}
where $0<\eta<1/4$ is to be specified later.

From \Cref{le2.0-2}, there exist $\bar u\in [DG]_2(B_{3/4};\gamma)$ for some universal constant $\gamma$ and a subsequence of $u_m$ (denoted by $u_m$ again) such that $u_m\rightarrow \bar u$ in $W_{loc}^{1,2}(B_1)$ weakly and in $L^2_{loc} (B_1)$ strongly. By the H\"{o}lder regularity for the De Giorgi class (see \Cref{le2.0}), there exists a constant $\bar{P}$such that
\begin{equation*}
  \begin{aligned}
    &\|\bar u-\bar{P}\|^*_{L^2(B_{r})}\leq \bar C\|\bar u\|_{L^{2}(B_{3/4})}r^{\bar \alpha}
    \leq \bar Cr^{\bar \alpha}, ~\forall ~0<r<1/2,\\
    &|\bar{P}|\leq \bar C.
  \end{aligned}
\end{equation*}
Take $0<\eta<1/4$ small enough such that
\begin{equation*}
  \eta^{\bar \alpha} \bar{C}\leq 1/2.
\end{equation*}
Thus,
\begin{equation}\label{e3}
 \|\bar u-\bar{P}\|^*_{L^2(B_{\eta})}\leq 1/2.
\end{equation}

By \cref{e2},
\begin{equation*}
\|u_m-\bar P\|^*_{L^2(B_{\eta})}> 1.
\end{equation*}
Let $m\rightarrow \infty$ and we have
\begin{equation*}
\|\bar u-\bar{P}\|^*_{L^2(B_{\eta})}\geq 1,
\end{equation*}
which contradicts \cref{e3}. \qed~\\

Now, we give the scaling argument.
\begin{lemma}\label{t-1}
Let $0<\delta<1$ be as in \Cref{l-1} and $u\in W^{1,2}(B_1)$ be a weak solution of \cref{e.div-0}. Suppose that
\begin{equation}\label{e2.10}
  \begin{aligned}
&\|u\|^*_{L^2(B_1)}\leq 1, \quad \|\bm{b}\|^*_{L^n(B_1)}\leq \delta, \quad \\
&\|c\|^*_{L^{n/2}(B_r)}\leq \frac{\delta}{2\bar{C}r^2(|\ln r|+1)},~\forall ~0<r\leq 1,\\
&\|\bm{d}\|^*_{L^{n}(B_r)}\leq \frac{\delta}{2\bar{C}r(|\ln r|+1)},~\forall ~0<r\leq 1,\\
&\|f\|_{C^{-2}(0)}\leq \frac{\delta}{2}, \quad \|\bm{f}\|_{C^{-1}(0)}\leq \frac{\delta}{2}.
  \end{aligned}
\end{equation}

Then $u\in BMO_2(0)$ and
\begin{equation*}
  |u|_{*,2,0}\leq C,
\end{equation*}
where $C$ is universal.
\end{lemma}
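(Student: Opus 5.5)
The plan is an iteration of \Cref{l-1} over the dyadic-type scales $\eta^k$, in the spirit of the Schauder argument. Unlike the H\"older case, the constants will not converge; they may drift linearly in $k$. I will prove by induction on $k\geq 0$ that there are constants $P_k$ with
\begin{equation*}
P_0=0,\qquad |P_{k+1}-P_k|\leq \bar C,\qquad \|u-P_k\|^*_{L^2(B_{\eta^k})}\leq 1 .
\end{equation*}
In particular $|P_k|\leq k\bar C$. The case $k=0$ is the hypothesis $\|u\|^*_{L^2(B_1)}\leq 1$.

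For the inductive step, set $r=\eta^k$ and $v(y)=u(ry)-P_k$ for $y\in B_1$. Then $v$ is a weak solution of an equation of the form \cref{e.div-0} with the following data:
\begin{itemize}
\item $\tilde a^{ij}(y)=a^{ij}(ry)$, $\tilde b=r\,\bm b(ry)$, $\tilde c=r^2c(ry)$, $\tilde d=r\,\bm d(ry)$;
\item $\tilde f=r^2\bigl(f-cP_k\bigr)(ry)$ and $\tilde f^i=r\bigl(f^i+d^iP_k\bigr)(ry)$.
\end{itemize}
These come from moving the terms generated by the constant $P_k$ into the right-hand side. I then check the hypotheses of \Cref{l-1} one by one, using the normalized norms:
\begin{itemize}
\item $\|\tilde b\|^*_{L^n(B_1)}=r\|\bm b\|^*_{L^n(B_r)}=c_n\|\bm b\|_{L^n(B_r)}\leq \|\bm b\|^*_{L^n(B_1)}\leq\delta$, since the normalized $L^n$ norm scales exactly like $r^{-1}$.
\item $\|\tilde c\|^*_{L^{n/2}(B_1)}=r^2\|c\|^*_{L^{n/2}(B_r)}\leq \delta$, and similarly $\|\tilde d\|^*_{L^n(B_1)}\leq\delta$.
\item From $f\in C^{-2}(0)$ and $\|f\|_{C^{-2}(0)}\leq\delta/2$ we get $r^2\|f\|^*_{L^{2n/(n+2)}(B_r)}\leq\omega(r)\leq\delta/2$, and likewise $r\|\bm f\|^*_{L^2(B_r)}\leq\delta/2$.
\end{itemize}

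The crucial point, and the only place the logarithmic hypothesis enters, is controlling the new source terms $r^2cP_k$ and $r\bm dP_k$, whose size grows with $k$. By H\"older's inequality in normalized norms ($\frac{2n}{n+2}<\frac n2$ and $2<n$ when $n\geq3$),
\begin{equation*}
r^2|P_k|\,\|c\|^*_{L^{\frac{2n}{n+2}}(B_r)}\leq k\bar C\, r^2\|c\|^*_{L^{n/2}(B_r)}\leq \frac{k\bar C\,\delta}{2\bar C(k|\ln\eta|+1)}\leq\frac{\delta}{2}.
\end{equation*}
Here the last inequality holds provided $|\ln\eta|\geq 1$, which we may assume by shrinking $\eta$ (still universal). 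The same computation gives $r|P_k|\,\|\bm d\|^*_{L^2(B_r)}\leq\delta/2$. Adding up, we get $\|\tilde f\|^*\leq\delta$ and $\|\tilde{\bm f}\|^*\leq\delta$, and $\|v\|^*_{L^2(B_1)}\leq 1$ by the induction hypothesis. \Cref{l-1} then yields a constant $P$ with $|P|\leq\bar C$ and $\|v-P\|^*_{L^2(B_\eta)}\leq 1$. Setting $P_{k+1}=P_k+P$ and scaling back completes the induction. I expect this step to be the main obstacle: the linear growth of $|P_k|$ must be exactly compensated by the factor $(|\ln r|+1)^{-1}$.

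Finally, I will derive the $BMO$ bound. For $0<\rho<1$, choose $k$ with $\eta^{k+1}\leq\rho<\eta^k$. Since the mean is the best constant approximation in $L^2$ up to a factor, and by comparing the normalized norms on $B_\rho\subset B_{\eta^k}$,
\begin{equation*}
\|u-u_{B_\rho}\|^*_{L^2(B_\rho)}\leq 2\|u-P_k\|^*_{L^2(B_\rho)}\leq 2\eta^{-n/2}\|u-P_k\|^*_{L^2(B_{\eta^k})}\leq 2\eta^{-n/2}.
\end{equation*}
This bound is universal, so $u\in BMO_2(0)$ with $|u|_{*,2,0}\leq C$.
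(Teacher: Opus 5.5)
Your proposal is correct and follows the paper's proof essentially step for step. It uses the same induction producing constants $P_k$ with $|P_k-P_{k-1}|\leq\bar C$, and the same rescaling in which the terms $cP_k$ and $\bm d P_k$ are moved to the right-hand side and absorbed via $|P_k|\leq \bar C|\ln r|$ against the $(|\ln r|+1)^{-1}$ decay, followed by the same comparison of scales at the end. Your factor $\eta^{-n/2}$ in the last step is the correct one for normalized norms (the paper writes $1/\eta$). Also, the condition $|\ln\eta|\geq 1$ holds automatically because $\eta<1/4$, so no shrinking of $\eta$ is needed.
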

\proof To prove $u\in BMO_2(0)$, we only need to prove the following: there exists a sequence of constants $P_m$ such that for any $m\geq 0$,
\begin{equation}\label{e4}
  \begin{aligned}
    &\|u-P_m\|^*_{L^2(B_{\eta^m})}\leq 1,\\
    &|P_m-P_{m-1}|\leq \bar{C},
  \end{aligned}
\end{equation}
where $0<\eta<1/4$ and $\bar{C}$ are as in \Cref{l-1}.

The first inequality in \cref{e4} implies $u\in BMO_2(0)$. Indeed, for any $0<r<1$, there exists $m\geq 0$ such that $\eta^{m+1}\leq r<\eta^m$. Recall
\begin{equation}\label{e7}
 \|u-u_{B_r}\|^*_{L^2(B_{r})}=\inf_{A\in \mathbb{R}}\|u-A\|^*_{L^2(B_{r})}.
\end{equation}
Then with the aid of \Cref{e4}, we have
\begin{equation*}
\begin{aligned}
 \|u-u_{B_r}\|^*_{L^2(B_{r})} \leq&\|u-P_{m}\|^*_{L^2(B_{r})}
 \leq\frac{1}{\eta}\|u-P_{m}\|^*_{L^2(B_{\eta^{m}})} \leq \frac{1}{\eta}.
\end{aligned}
\end{equation*}
That is, $u\in BMO_2(0)$.

In the following, we prove \cref{e4} by induction. For $m=0$, by setting $P_0\equiv P_{-1}\equiv 0$, the conclusion holds clearly. Suppose that the conclusion holds for $m$ and we need to prove that it holds for $m+1$.

Let $r=\eta^m$, $y=x/r$ and
\begin{equation*}
  \tilde{u}(y)=u(x)-P_m.
\end{equation*}
Then $\tilde{u}$ is a weak solution of
\begin{equation*}
 (\tilde a^{ij}\tilde u_i+\tilde d^j\tilde u)_j+\tilde b^i\tilde u_i+\tilde c \tilde u=\tilde f-\tilde f^i_i \quad \mbox{in}~~B_1,
\end{equation*}
where
\begin{equation*}
  \begin{aligned}
&\tilde a^{ij}(y)=a^{ij}(x), \quad \tilde b^i(y)=rb^i(x), \quad \tilde{c}(y)=r^2c(x), \quad \tilde{d}^i(y)=rd^i(x),\\
&\tilde{f}(y)=r^2f(x)-r^2c(x)P_m, \quad \tilde{f}^i(y)=rf^i(x)+rd^i(x)P_m.
  \end{aligned}
\end{equation*}

By induction,
\begin{equation}\label{e2.9}
|P_m|\leq \sum_{i=1}^{m}|P_i-P_{i-1}|\leq m\bar{C}=\frac{\bar{C}}{|\ln \eta|}|\ln r|\leq \bar{C}|\ln r|
\end{equation}
and
\begin{equation}\label{e2.1}
\|\tilde{u}\|^*_{L^2(B_1)}=\|u-P_m\|^*_{L^2(B_{\eta^m})}\leq 1.
\end{equation}
In addition, by the assumptions \cref{e2.10},
\begin{equation*}
  \begin{aligned}
&\|\bm{\tilde b}\|^*_{L^n(B_1)}=r\|\bm{b}\|^*_{L^n(B_r)}=\|\bm{b}\|_{L^n(B_r)}\leq \|\bm{b}\|_{L^n(B_1)}\leq \delta,\\
&\|\tilde c\|^*_{L^{n/2}(B_1)}=r^2\|c\|^*_{L^{n/2}(B_r)}\leq \delta,\\
&\|\bm{\tilde d}\|^*_{L^n(B_1)}=r\|\bm{d}\|^*_{L^n(B_r)}\leq \delta.
  \end{aligned}
\end{equation*}
Similarly, by \cref{e2.10} and \cref{e2.9},
\begin{equation*}
  \begin{aligned}
\|\tilde f\|^*_{L^{\frac{2n}{n+2}}(B_1)}\leq & r^2\|f\|^*_{L^{\frac{2n}{n+2}}(B_r)}+r^2|P_m|\|c\|^*_{L^{\frac{2n}{n+2}}(B_r)}\leq \frac{\delta}{2}
+r^2|P_m|\|c\|^*_{L^{n/2}(B_r)} \\
\leq& \frac{\delta}{2}+\frac{\delta}{2}= \delta,\\
\|\bm{\tilde f}\|^*_{L^2(B_1)}\leq& r\|\bm{f}\|^*_{L^2(B_r)}+r|P_m|\|\bm{d}\|^*_{L^2(B_r)}\leq\frac{\delta}{2}+ r|P_m|\|\bm{d}\|^*_{L^n(B_r)}\\
\leq& \frac{\delta}{2}+\frac{\delta}{2}= \delta.
  \end{aligned}
\end{equation*}
Therefore, the assumptions of \Cref{l-1} are satisfied. By \Cref{l-1}, there exists a constant $\tilde P$ such that
\begin{equation*}
  \begin{aligned}
    &\|\tilde{u}-\tilde P\|^*_{L^2(B_{\eta})}\leq 1,\\
    &|\tilde P|\leq \bar{C}.
  \end{aligned}
\end{equation*}
By rescaling back to $u$ with $P_{m+1}=P_m+\tilde P$, \cref{e4} holds for $m+1$. By induction, the proof is complete.\qed~\\

Now, we show that \Cref{t-1} implies the $BMO$ regularity.~\\
\noindent\textbf{Proof of \Cref{th1.1}.} We just need to make some normalization such that the assumptions of \Cref{t-1} are satisfied. For $0<\rho <1/4$, consider
\begin{equation*}
y=\frac{x}{\rho}, \quad \tilde{u}(y)=\frac{u(x)}{K}, \quad K=\|u\|^*_{L^2(B_{\rho})}+\frac{2}{\delta}\|f\|^*_{C^{-2}(0)}+\frac{2}{\delta}\|\bm{f}\|_{C^{-1}(0)},
\end{equation*}
where $\delta$ is as in \Cref{l-1}. Then $\tilde{u}$ is a weak solution of
\begin{equation*}
 (\tilde a^{ij}\tilde u_i+\tilde d^j\tilde u)_j+\tilde b^i\tilde u_i+\tilde c \tilde u=\tilde f-\tilde f^i_i \quad\mbox{in}~~B_1,
\end{equation*}
where
\begin{equation*}
  \begin{aligned}
&\tilde a^{ij}(y)=a^{ij}(x), \quad \tilde b^i(y)=\rho b^i(x), \quad \tilde{c}(y)=\rho^2c(x), \quad \tilde{d}^i(y)=\rho d^i(x),\\
&\tilde{f}(y)=\frac{1}{K}\rho^2f(x) , \quad \tilde{f}^i(y)=\frac{1}{K}\rho f^i(x).
  \end{aligned}
\end{equation*}

First,
\begin{equation*}
\|\tilde u\|^*_{L^2(B_1)}=\frac{1}{K}\|u\|^*_{L^2(B_{\rho})}\leq 1.
\end{equation*}
Next, by the assumptions of \Cref{th1.1},
\begin{equation*}
  \begin{aligned}
&\|\bm{\tilde b}\|^*_{L^n(B_1)}=\rho\|\bm{b}\|^*_{L^n(B_\rho)}=\|\bm{b}\|_{L^n(B_\rho)},\\
&\|\tilde c\|^*_{L^{n/2}(B_r)}=\rho^2\|c\|^*_{L^{n/2}(B_{\rho r})}\leq \frac{\delta_0}{r^2\left(|\ln \rho r|+1\right)}\leq \frac{\delta_0}{r^2\left(|\ln r|+1\right)}, ~\forall ~ 0<r<1,\\
&\|\bm{\tilde d}\|^*_{L^n(B_r)}=\rho\|\bm{d}\|^*_{L^n(B_{\rho r})}\leq \frac{\delta_0}{r\left(|\ln \rho r|+1\right)}\leq \frac{\delta_0}{r\left(|\ln r|+1\right)}, ~\forall ~ 0<r<1,\\
&\|\tilde f\|_{C^{-2}(0)}\leq  \frac{\delta}{2}, \quad \|\bm{\tilde f}\|_{C^{-2}(0)}\leq  \frac{\delta}{2}.
  \end{aligned}
\end{equation*}
Hence, by taking $\rho$ small enough (depending on $\bm{b}$) and $\delta_0$ small enough ($\delta_0\leq \delta/(2\bar{C})$), the assumptions of \Cref{t-1} are satisfied. From \Cref{t-1}, $\tilde{u}\in BMO_2(0)$. By rescaling back to $u$, we have $u\in BMO_2(0)$ and \cref{e.Ca.esti-ell} holds.~\qed

\section*{Declarations}
\noindent \textbf{Ethical Approval } not applicable.

\medskip

\noindent \textbf{Competing interests} The author declares that the author has no competing interests as defined by Springer, or other interests that might be perceived to influence the results and/or discussion reported in this paper.

\medskip

\noindent \textbf{Availability of data and materials} not applicable.

\printbibliography
\end{document}